\newtheorem{thm}{Theorem}[section]
\newtheorem{prop}[thm]{Proposition}
\theoremstyle{definition}
\newtheorem{dfn}[thm]{Definition}
\title
{Around a conjecture by R. Connelly, E. Demaine, and G. Rote}
\author
       {A. Igamberdiev, G. Panina}
\keywords{Configuration space, planar polygonal linkage, expansive
motion, carpenter's rule }
\begin{document}

\begin{abstract}
Denote by $M(P)$ the configuration space of a planar polygonal
linkage, that is, the space of all possible planar configurations
modulo congruences,  including configurations with
self-intersections. A particular interest  attracts its subset
$M^o(P) \subset M(P)$  of all configurations \emph{without}
self-intersections.  R. Connelly, E. Demaine, and G. Rote proved
that $M^o(P)$ is contractible and conjectured that so is its closure
$\overline{M^o(P)}$. We disprove this conjecture  by showing that a
special choice of $P$ makes the homologies $H_k(\overline{M^o(P)})$
non-trivial.

\end{abstract}
\maketitle \setcounter{section}{0}

\section{Introduction}
An \textit{ $n$-linkage} is a sequence of positive numbers
$l_1,\dots ,l_n$. It should be interpreted as a collection of rigid
bars of lengths $l_i$ joined consecutively by revolving joints in a
closed chain.

\begin{dfn}
 For a linkage $P$, \textit{a configuration} in the
Euclidean space $ \mathbb{R}^d$ is a sequence of points
$R=(a_1,\dots,a_{n}), \ a_i \in \mathbb{R}^d$ with
$l_i=|a_i,a_{i+1}|, \ n+1=1$.

 The set $M(P)$ of all such
configurations modulo the action of all isometries of $R^2$ is
\textit{the configuration space of the linkage} $P$.

It comes together with its subset $M^o(P) \subset M(P)$  of all
configurations \emph{without} self-intersections.

\end{dfn}

In \cite{CDR}  R. Connelly, E. Demaine, and G. Rote proved a
strengthened version of the famous carpenter's rule conjecture.
Namely, using \textit{expansive motions} they showed that $M^o(P) $
is contractible. In the same paper they conjectured that the closure
of $M^o(P) $ is also contractible.

We disprove the conjecture by showing that not only
$\overline{M^o(P)}$ can be non-contractible, but can also have other
non-trivial homologies.  For this, we use a simple trick which
produces non-contractible loops in $\overline{M^o(P)}$. To
understand the trick, it suffices to look at Fig. \ref{loop}.

However,  authors of \cite{CDR} were motivated in their study by a
physical model of a linkage which allows self-touching and
self-overlapping, but does not allow the edges pass one through
another, as it happens in our examples. It remains an open question
whether the space becomes contractible if we forbid such passes.

In this respect we mention two papers \cite{ADG, CDR1} where authors
treat the space of \textit{self-touching configurations}, that is,
configurations without transversal crossings. The authors equip the
space by some additional structure  which yields an ordering on
overlapping edges. In such a space the contractible loop introduced
in Section 2 becomes disconnected. In \cite{ADG}, it is proven that
the space of self-touching configuration $\mathcal{A}(P)$ (equipped
with additional structure) is connected. To the best of our
knowledge, nothing is known about contractibility of the space
$\mathcal{A}(P)$.

However, if we forget the additional structure, the set
$\mathcal{A}(P)$ does not coincide with the set $\overline{M^o(P)}$:
 a self-touching configuration does not necessarily
belong to $\overline{M^o(P)}$, see Fig. \ref{isolated}.

\begin{figure}
 \centering
\includegraphics[width=8 cm]{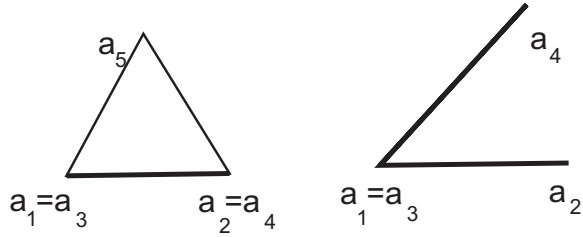}
\caption{These self-touching configurations do not belong to
$\overline{M^o(P)}$ }\label{isolated}
\end{figure}

\bigskip

\textbf{Acknowledgments.} The first author was supported by the
Chebyshev Laboratory (Department of Mathematics and Mechanics,
St.-Petersburg State University) under RF government grant
11.G34.31.0026.

\section{A non-contractible loop }
For a reader not acquainted with the homology theory, we start with
an elementary example.

\begin{prop}\label{prop} Assume that for a linkage $P$, we have

$l_1>l_2<l_3$, and
 $l_1-l_2+l_3<\sum_{j\ne 1,2,3}l_j$.

Then the space $\overline{M^o(P)}$ contains a non-contractible loop.
\end{prop}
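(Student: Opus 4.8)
The plan is to exhibit an explicit loop in $\overline{M^o(P)}$ along which the short bar $l_2$ turns once around while the rest of the linkage stays essentially frozen, and then to detect its non-triviality by a winding number. Here is the loop. Because $l_2<l_1$ and $l_2<l_3$ we have $\max(l_1,l_3)<l_1+l_3-l_2$, and $l_1+l_3-l_2=l_1-l_2+l_3<\sum_{j\ne1,2,3}l_j=:L$ by hypothesis; so we may fix a number $D$ with $\max(l_1,l_3)<D<l_1+l_3-l_2$, and then $D<L$. Since $D<L$, the bars $l_4,\dots,l_n$ can be assembled into an embedded polygonal arc $\Gamma$ joining two points $A,B$ with $|AB|=D$; fix such a $\Gamma$, read it as the chain $a_4a_5\cdots a_na_1$ with $a_1=A$, $a_4=B$, and place it off to one side of the segment $AB$ (using the slack $L-D$). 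With $a_1$, $a_4$ and $\Gamma$ frozen, the remaining chain $a_1a_2a_3a_4$ is a four-bar linkage with bars $(l_1,l_2,l_3,D)$ and fixed frame $[A,B]$; by the choice of $D$ it is non-degenerate, $l_2$ is its strictly shortest bar, $D$ its strictly longest, and $l_2+D<l_1+l_3$. Thus it is a Grashof mechanism whose shortest bar is the coupler opposite the frame — a ``double rocker'' — so there is a loop in its configuration space along which the coupler $l_2$ turns by a nonzero multiple of $2\pi$. Running the full linkage around this loop yields a continuous loop $\theta\mapsto R_\theta$ in $M(P)$ along which $\Gamma$ is frozen, no $R_\theta$ is collinear, and $\overrightarrow{a_2a_3}$ has nonzero total turning.

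Next one must check that each $R_\theta$ lies in $\overline{M^o(P)}$. The point of choosing $D$ strictly inside $(\max(l_1,l_3),\,l_1+l_3-l_2)$ is that the four-bar $(l_1,l_2,l_3,D)$ then has \emph{no} collinear configuration at all: a collinear configuration would force $D$ to equal one of $l_1+l_2+l_3$, $l_1+l_2-l_3$, $l_1-l_2+l_3$, $l_2+l_3-l_1$, each of which is excluded by $\max(l_1,l_3)<D<l_1+l_3-l_2$. In particular no $R_\theta$ is flat. Choosing $\Gamma$ generic and meeting the region swept by the chain $a_1a_2a_3a_4$ only near $a_1$ and $a_4$ (and, if necessary, flexing $\Gamma$ slightly as $\theta$ varies), the only way a given $R_\theta$ can fail to be embedded is by a self-touching among the three bars $a_1a_2$, $a_2a_3$, $a_3a_4$, and this is removed by an arbitrarily small motion of $a_2,a_3$ pulling the touching bars apart. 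Hence each $R_\theta$ is a limit of embedded configurations, so $\theta\mapsto R_\theta$ is a loop in $\overline{M^o(P)}$.

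Finally, non-contractibility — where the real difficulty lies. A limit of embedded polygons has a well-defined orientation (its rotation number is $\pm1$), so on the orientation double cover $\widetilde M(P)$ of $M(P)$ the signed angle $\varphi$ from bar $l_4$ to bar $l_2$ is a continuous map to $\mathbb R/2\pi\mathbb Z$. The loop $\theta\mapsto R_\theta$ meets no collinear configuration, hence lifts to a loop $\theta\mapsto\widetilde R_\theta$ in $\widetilde M(P)$; along it the reference bar $l_4\subset\Gamma$ is fixed while $\overrightarrow{a_2a_3}$ turns nontrivially, so $\varphi\circ\widetilde R$ has nonzero degree and $\widetilde R$ is not null-homotopic in the closure of the oriented configuration space. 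The obstacle is to transfer this back to $\overline{M^o(P)}$ itself: the covering $\widetilde M(P)\to M(P)$ is branched precisely over the flat configurations, which do lie in $\overline{M^o(P)}$ and are limits of embedded polygons of both orientations, so $\varphi$ and the double cover degenerate there and an a priori null-homotopy of the original loop might run through them. The flat configurations of this linkage, however, form only a finite set, so one expects to be able to push any null-homotopy off them and reduce to the oriented picture; making this precise — together with the claim that $\Gamma$ can be kept clear of the swept region — is the technical core, and is exactly where the strict inequality $l_1-l_2+l_3<\sum_{j\ne1,2,3}l_j$ is used.
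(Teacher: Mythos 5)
There are two genuine gaps, and the first is fatal to the construction as given. You assert that along the four-bar loop ``the only way a given $R_\theta$ can fail to be embedded is by a self-touching among the three bars $a_1a_2$, $a_2a_3$, $a_3a_4$,'' removable by a small perturbation. This is false: when the coupler of a Grashof four-bar makes a full turn relative to the frame, the linkage necessarily passes through \emph{crossed} (antiparallelogram-like) configurations in which the bars $l_1=a_1a_2$ and $l_3=a_3a_4$ intersect \emph{transversally} at interior points. (Concretely, take $l_1=l_3=3$, $l_2=1$, $D=4$: the symmetric configuration $a_1=(0,0)$, $a_4=(4,0)$, $a_2=(2.5,\sqrt{2.75})$, $a_3=(1.5,\sqrt{2.75})$ lies on your loop, and the segments $a_1a_2$ and $a_3a_4$ cross at $(2,\,1.33)$. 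One can see the necessity topologically: if the closed quadrilateral $a_1a_2a_3a_4a_1$ stayed simple and fold-free, the sum of exterior angles at $a_1$ and $a_2$ would be a continuous real-valued lift of the frame-to-coupler angle around the loop, forcing its winding number to be zero.) A transversal crossing of two non-adjacent bars persists under every sufficiently small perturbation, so such a configuration has a whole neighborhood in $M(P)$ disjoint from $M^o(P)$ and hence does \emph{not} lie in $\overline{M^o(P)}$. Your loop therefore leaves $\overline{M^o(P)}$. The paper avoids exactly this trap: its loop rotates the angle $\angle(a_1a_2a_3)$ through a full circle and exits $M^o(P)$ only at the single \emph{triple-fold} configuration ($l_2$ folded back onto $l_1$, $l_3$ folded back over $l_2$, all collinear), which is a genuine limit of embedded near-flat zigzags; the hypotheses $l_1>l_2<l_3$ and $l_1-l_2+l_3<\sum_{j\ne1,2,3}l_j$ are precisely what make that fold realizable and closable, not (as you suggest) what is needed to push a null-homotopy off the flat configurations.

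The second gap is one you acknowledge yourself: the non-contractibility argument via the orientation double cover is left as ``the technical core'' and never completed. It is also unnecessary. The oriented angle $\angle(a_1a_2a_3)$ (equivalently, the direction of $\overrightarrow{a_2a_3}$ relative to a fixed reference bar) is a continuous $S^1$-valued function $\alpha$ on all of $M(P)$ -- it needs no orientation of the polygon, no rotation number, and it does not degenerate at flat configurations. Once one has a loop $\gamma$ in $\overline{M^o(P)}$ with $\alpha\circ\gamma=\mathrm{id}_{S^1}$, the loop is non-contractible because $\gamma_*$ is injective on $\pi_1$; this is the paper's entire argument. So even if your loop were repaired, the branched-cover discussion should be discarded in favor of composing with this globally defined angle.
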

\begin{proof}

\begin{figure}
 \centering
\includegraphics[width=8 cm]{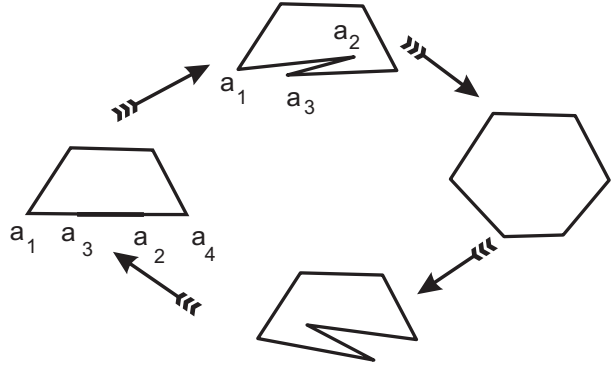}
\caption{A triple fold yields a non-contractible loop}\label{loop}
\end{figure}

Consider a continuous mapping $\alpha: M(P)\to S^1$ which maps a
configuration of the linkage to the value of the oriented angle
$\angle (a_1\ a_2\ a_3)$.

Next, consider  a  loop $\gamma(t):S^1\to \overline{M^o(P)}$ which
is depicted in Fig. \ref{loop}. All the points of $\gamma (S^1)$
except for exactly one (which corresponds to the configuration with
overlapping edges) lie in ${M^o(P)}$. An appropriate choice of
parameterization of $\gamma$ makes the following diagram commute:

$$
\xymatrix{
                                           & \overline{M^o(P)} \ar[dr]^{\alpha} &\\
S^1 \ar[ur]^{\gamma} \ar@{->}[rr]^{id}  & &S^1. }
$$
(Here and in the sequel, $id$ denotes the identity mapping.) This
means that the loop $\gamma$ is non-contractible.
\end{proof}

 In this respect, we conject  that for any linkage $P$, the space $$\overline{M^o(P)}
\setminus \{\hbox{configurations with triple folds}\}$$ is
contractible.

\section{Non-trivial homologies of the space $\overline{M^o(P)}$}
\begin{thm}
For every $m\in \mathbb N$, there exists a linkage $P$ such that for
all $k \leq n$, all  the homology groups $H_k(\overline{M^o(P)})$
are non-trivial.
\end{thm}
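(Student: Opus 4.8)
The plan is to exhibit, for a suitable $P$ carrying $m$ ``independent triple folds'', a continuous map $\sigma\colon T^m\to\overline{M^o(P)}$ from the $m$-torus $T^m=(S^1)^m$, together with a continuous map $\Phi\colon\overline{M^o(P)}\to T^m$, such that $\Phi\circ\sigma=\mathrm{id}_{T^m}$. Then $\sigma_*\colon H_k(T^m)\to H_k(\overline{M^o(P)})$ has the left inverse $\Phi_*$, hence is injective for every $k$; since $H_k(T^m;\mathbb Z)\cong\mathbb Z^{\binom mk}$ is nonzero precisely for $0\le k\le m$, it follows that all groups $H_k(\overline{M^o(P)})$ with $k\le m$ are nontrivial, which is the assertion.

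\emph{Construction of $P$.} Take a large ``long and thin'' embedded closed polygon as a carrier, choose $m$ of its edges that are pairwise far apart, and replace the $j$-th chosen edge ($j=1,\dots,m$) by a \emph{triple-fold gadget}: three consecutive bars $(l_{3j-2},l_{3j-1},l_{3j})$ with $l_{3j-2}>l_{3j-1}<l_{3j}$, all of size $<\varepsilon$, whose folded-flat span $l_{3j-2}-l_{3j-1}+l_{3j}$ equals the length of the edge they replace. If $\varepsilon$ is small compared with the pairwise carrier-distances between the gadgets and with the size of the carrier, and if the global closure inequality of Proposition \ref{prop} is kept (total length of the ``rest'' exceeds the sum of the folded spans, with room to spare), then any motion of one gadget confined to an $\varepsilon$-neighbourhood of it touches neither another gadget nor the remaining part of the carrier.

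\emph{The maps.} For each $j$ let $\alpha_j\colon\overline{M^o(P)}\to S^1$ be the oriented-angle map at the middle vertex of the $j$-th gadget (the map $\alpha$ of Proposition \ref{prop}), and put $\Phi=(\alpha_1,\dots,\alpha_m)$. Applying Proposition \ref{prop} to the $j$-th gadget, with the rest of $P$ playing the role of the ``remaining'' chain, yields a loop $\gamma_j\colon S^1\to\overline{M^o(P)}$ which, by the smallness arrangement above, can be realized by motions supported in a fixed $\varepsilon$-ball around the $j$-th gadget; moreover $\alpha_j\circ\gamma_j=\mathrm{id}_{S^1}$, and every point of $\gamma_j(S^1)$ except the single ``folded'' one lies in $M^o(P)$. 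Define $\sigma\colon T^m\to M(P)$ by letting $\sigma(\theta_1,\dots,\theta_m)$ be the configuration whose $j$-th gadget is placed as $\gamma_j(\theta_j)$ for every $j$, the carrier being otherwise in its fixed embedded position; since the gadgets have disjoint, non-interfering supports, $\sigma$ is well defined and continuous. If no coordinate $\theta_j$ equals its folded value, all gadgets are simultaneously embedded, mutually disjoint, and disjoint from the carrier, so $\sigma(\theta)\in M^o(P)$; such $\theta$ form a dense subset of $T^m$, whence $\sigma(T^m)\subseteq\overline{M^o(P)}$. Finally $\alpha_j(\sigma(\theta))=\alpha_j(\gamma_j(\theta_j))=\theta_j$, so $\Phi\circ\sigma=\mathrm{id}_{T^m}$, and the homological conclusion above applies.

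\emph{Main obstacle.} The only non-formal point is the ``smallness arrangement'': producing an explicit carrier and explicit bar-lengths and verifying that the $m$ triple-fold motions are genuinely independent — small, far apart, confined to disjoint neighbourhoods — while the whole polygon still closes up and remains embedded away from the gadgets (and while the single-fold loop of Proposition \ref{prop} can be realized locally for a tiny gadget). This is elementary planar geometry, but it is where all the care goes; once it is in place, $\Phi\circ\sigma=\mathrm{id}$ is Proposition \ref{prop} read off coordinatewise, and the rest is the functoriality of homology.
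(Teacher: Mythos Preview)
Your argument is correct and follows essentially the same route as the paper's: build a linkage carrying $m$ independent triple folds, take the product of the oriented-angle maps $\Phi=(\alpha_1,\dots,\alpha_m)\colon\overline{M^o(P)}\to T^m$, build a section $\sigma\colon T^m\to\overline{M^o(P)}$ by moving the $m$ folds independently, and read off $\Phi\circ\sigma=\mathrm{id}$ to get an injection $H_k(T^m)\hookrightarrow H_k(\overline{M^o(P)})$. The paper does precisely this (with $\alpha$ for your $\Phi$ and $\gamma$ for your $\sigma$), only more tersely; your explicit discussion of the carrier polygon, the disjoint supports, and the density argument for $\sigma(T^m)\subseteq\overline{M^o(P)}$ spells out points the paper leaves to a figure.
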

\begin{proof}
We construct a polygonal linkage with $n=4m$ edges, combining $m$
triple folds  from the previous section, see Fig. \ref{loops}.

Following the pattern of Section 2, we consider a continuous mapping
$$\alpha:\overline{M^o(P)}\to T^m=S^1\times ...\times S^1,$$
which maps a linkage $P$ to the string of oriented angles
$$\alpha(P)=(\angle(a_{1}\ a_2\ a_{3}),\angle(a_{6}\ a_7\ a_{8}),\cdots ).$$

Besides, analogously to the above, we get  a mapping

$$\gamma: T^m=S^1\times ...\times S^1\to \overline{M^o(P)}.$$

We may freely assume that the parameterization of  $\gamma$  makes
the following diagram commute:

$$
\xymatrix{
                                           & \overline{M^o(P)} \ar[dr]^{\alpha} &\\
T^m \ar[ur]^{\gamma} \ar@{->}[rr]^{id}  & &T^m .}
$$

 This immediately implies the following
commutative diagram for homology groups (see \cite{D}):

$$
\xymatrix{
                                           & H_k(\overline{M^o(P)}) \ar[dr]^{\alpha_k} &\\
H_k(T^m) \ar[ur]^{\gamma_k} \ar@{->}[rr]^{id}  & &H_k(T^m). }
$$

\bigskip

So, the group $H_k(T^m)$ (which is a free abelian group of rank
${\binom{m}{k}}$) is a subgroup of $H_k(\overline{M^o(P)})$.
\end{proof}

\begin{figure}
 \centering
\includegraphics[width=6cm]{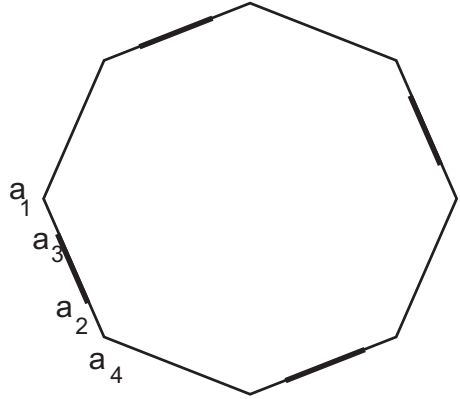}
\caption{$m$ triple folds yield $m$ non-homological
loops}\label{loops}
\end{figure}
\end{document}